\documentclass[11pt]{article}
\usepackage{amsmath, amssymb, amsthm}
\usepackage{geometry}
\usepackage{booktabs}
\theoremstyle{plain}
\newtheorem{theorem}{Theorem}[section]
\newtheorem{lemma}[theorem]{Lemma}

\theoremstyle{definition}

\theoremstyle{remark}

\newcommand{\eps}{\varepsilon}

\begin{document}
\begin{center}
	{\LARGE \textbf{On the Mean Value of $D_k(n)$ in Arithmetic Progressions}}\\[1cm]
	
	{\large Meselem KARRAS}\\[0.3cm]
	
	Faculty of Science and Technology, Tissemsilt University, Algeria.\\
	FIMA Laboratory, Khemis Miliana University, Algeria.\\
	Email: \texttt{m.karras@univ-tissemsilt.dz}
\end{center}

\bigskip

\noindent

\begin{abstract}
	Let  \(k \ge 2\) be a fixed integer. We define the multiplicative function \(D_k(n) = d_k(n)/d_k^*(n)\), such that \(d_k(n)\) is the Piltz divisor function and \(d_k^*(n) = k^{\omega(n)}\) is its unitary analogue, where $\omega(n)$ is the number of distinct prime divisors of $n$. We establish an asymptotic formula for the sum
	\[
	\sum_{\substack{n \le x \\ n \equiv a \pmod q}} D_k(n),
	\]
	where \(\gcd(a,q)=1\). This result is a generalization of the study presented in \cite{Derbal 2023}.\\
	\noindent \\
	\textbf{MSC 2020:} 11A25, 11N37.\\
	\textbf{Keywords:} Dirichlet characters,  Piltz divisor function, asymptotic formula,  Arithmetic progressions.
\end{abstract}
	
\section{Introduction}

The Dirichlet–Piltz divisor function $d_k$ is defined by $d_1=1$, and for a fixed integer $k\ge 2$ and $n\ge 1$
\[ d_k(n)=\sum_{n_1\dots n_k=n}{1}=\sum_{ml=n}d_{k-1}(m).
\]
The unitary analogue $d_k^*$ is defined as follows \[d_k^*(n)=k^{\omega(n)},\] where $\omega(n)$ is the number of distinct prime divisors of $n$. The arithmetic function
\begin{equation}
D_k(n)=\frac{d_k(n)}{d_k^*(n)},
\end{equation}\label{eq:1}
is multiplicative, and for any prime power $p^m$, 
\begin{equation*}
	D_k(p^m)=\frac{1}{k}\binom{k+m-1}{m}.
\end{equation*}

This function defined by \eqref{eq:1} was studied in \cite{Karras 2018}, and the authors gave an asymptotic formula for its summation function as follows:
\begin{equation}\label{eq:KarrasResult}
	\sum_{n\le x}D_k(n)=A_kx+O\!\bigl(x^{1/2}(\log x)^{k-2}\bigr),
\end{equation}
where
\begin{equation}\label{eq:Ak}
	A_k=\prod_{p}\Bigl(1-\frac1p\Bigr)\Bigl(1-\frac1k+\frac1k\Bigl(1-\frac1p\Bigr)^{-k}\Bigr).
\end{equation}
Some values of $A_k$ (rounded to $4$ decimal places):
\begin{center}
	\begin{tabular}{c|ccccccc}
		$k$       & 2      & 3      & 4      & 5      & 6       & 7       & 8       \\
		\hline
		$A_k$     & 1.4276 & 2.2239 & 3.7997 & 7.1067 & 14.4445 & 31.5962 & 73.6569
	\end{tabular}
\end{center}

In particular, in the case $k=2$, an asymptotic formula was obtained in \cite{Karras 2016} by
\[
\sum_{n\le x}\frac{d(n)}{d^*(n)} = A_2 x + O(x^{1/2}),
\]
with $$A_2 = \frac{\pi^2}{6}\prod_p(1-\frac{1}{2p^2}+\frac{1}{2p^3}).$$

On the other hand, in the same particular case, the authors in \cite{Derbal 2023} studied this summation function and obtained an asymptotic formula for over arithmetic progression, with this error $O(x\exp(-c\sqrt{\log x\log\log x}))$. Their proof is based on Perron's formula and Ikehara's theorem.
These studies of this function motivate us to investigate it in arithmetic progressions and to derive an asymptotic formula.

In this paper, we will study the following sum
\begin{equation}\label{eq:4}
S_k(x;q,a)=\sum_{\substack{n\le x\\ n\equiv a\pmod q}}D_k(n),
\end{equation}
where $a$ and $q$ are integers, $q\ge1$ and $(a,q)=1$.
Our object is to establish an asymptotic formula for this sum using elementary methods. 

\section{A useful auxiliary theorem}

	First, in this section, we present an important result on the sum of function $D_k(n)$ over integers $n\le x$ that are coprime to $q$.

\begin{theorem}\label{thm:1}
	For any $x \geq 2$ and any $\eps > 0$, we have
\begin{equation}\label{eq:5}
\sum_{\substack{n \leq x \\ (n,q)=1}} D_k(n) = G_k(q) \cdot x 
+ O_{q, \eps}\left(2^{\omega(q)}\cdot x^{1/2+\eps} \right),
\end{equation}
where $\omega(q)$ denotes the number of distinct prime factors of $q$ and $$G_k(q)=A_k\cdot\prod_{p \mid q} \frac{k}{k-1 + (1-1/p)^{-k}}.$$

In particular, if $q = O(x^\delta)$ for some fixed $\delta > 0$, then for any $\eps > 0$,
\begin{equation}\label{eq:6}
\sum_{\substack{n \leq x \\ (n,q)=1}} D_k(n) =  G_k(q) \cdot x + O_{\eps}\left(x^{1/2+\eps}\right).
\end{equation}
\end{theorem}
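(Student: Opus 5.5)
We write $D_k$ as a Dirichlet convolution and exploit the coprimality condition multiplicatively. Since $D_k(p)=1$ for every prime $p$, the natural factorization is $D_k = \mathbf{1} * h_k$, where $h_k$ is multiplicative with $h_k(p)=0$ and $h_k(p^m)=D_k(p^m)-D_k(p^{m-1})$ for $m\ge 2$. Then $h_k$ is supported on squarefull integers, and $|h_k(p^m)|\ll_k m^{k-1}$. Consequently $\sum_{n}|h_k(n)|n^{-\sigma}$ converges for every $\sigma>1/2$, and
\[
\sum_{n\le y}|h_k(n)|\ll_\eps y^{1/2+\eps}.
\]
This is the same mechanism that yields \eqref{eq:KarrasResult}.

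Now restrict to $(n,q)=1$. Because the condition is multiplicative, $D_k\chi_0 = \chi_0 * (h_k\chi_0)$, where $\chi_0$ is the principal character mod $q$. Hence
\[
\sum_{\substack{n\le x\\(n,q)=1}}D_k(n)=\sum_{\substack{m\le x\\(m,q)=1}}h_k(m)\sum_{\substack{l\le x/m\\(l,q)=1}}1 .
\]
The inner sum equals $\frac{\varphi(q)}{q}\frac{x}{m}+O(2^{\omega(q)})$, by the standard Möbius identity $\sum_{d\mid q}\mu(d)\lfloor y/d\rfloor$, whose error term counts the squarefree divisors of $q$. This is exactly the source of the factor $2^{\omega(q)}$.

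Inserting this gives the main term $\frac{\varphi(q)}{q}x\sum_{(m,q)=1}h_k(m)/m$, minus a tail $x\sum_{m>x}|h_k(m)|/m\ll x^{1/2+\eps}$, obtained by partial summation from the squarefull bound. It also produces the error
\[
2^{\omega(q)}\sum_{m\le x}|h_k(m)|\ll 2^{\omega(q)}x^{1/2+\eps}.
\]

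It remains to identify the constant. The Euler factor of $\sum_m h_k(m)m^{-s}$ at $p$ is $(1-p^{-s})\sum_{m\ge0}D_k(p^m)p^{-ms}$. The generating identity $\sum_m \binom{k+m-1}{m}t^m=(1-t)^{-k}$ gives
\[
\sum_{m\ge 0}D_k(p^m)t^m = 1-\frac1k+\frac1k(1-t)^{-k}.
\]
At $s=1$ the local factor is therefore $(1-1/p)\bigl(1-\frac1k+\frac1k(1-1/p)^{-k}\bigr)$, whose product over all $p$ is $A_k$, consistent with \eqref{eq:Ak}.

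Removing the primes $p\mid q$ divides $A_k$ by these factors. The prefactor $\varphi(q)/q$ cancels the $(1-1/p)$ parts, leaving $\prod_{p\mid q}\frac{k}{k-1+(1-1/p)^{-k}}$, which is exactly $G_k(q)$.

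For \eqref{eq:6}, note that $2^{\omega(q)}\ll_\eps q^{\eps}$. If $q=O(x^\delta)$, then $2^{\omega(q)}\ll x^{\delta\eps}$, and renaming $\eps$ absorbs this factor. The implied constant in our error term does not actually depend on $q$, only on $k$ and $\eps$.

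The main point needing care is the uniform estimate $\sum_{m\le y}|h_k(m)|\ll y^{1/2+\eps}$ together with its tail counterpart. We handle it by Rankin's trick: $\sum_{m\le y}|h_k(m)|\le y^{\sigma}\sum_m|h_k(m)|m^{-\sigma}$ with $\sigma=1/2+\eps$, where the series converges because its Euler factors are $1+O_k(p^{-2\sigma})$. The tail bound follows in the same way, since $\sum_{m>x}|h_k(m)|/m\le x^{\sigma-1}\sum_m|h_k(m)|m^{-\sigma}$.
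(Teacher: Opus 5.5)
Your proposal is correct and follows essentially the same route as the paper: your $h_k=D_k*\mu$ is the paper's $g_k$. The coprime count with error $O(2^{\omega(q)})$, the completion of the tail, and the Euler-product identification of $G_k(q)$ are exactly what Lemmas~\ref{lem:sum_via_convolution} and~\ref{lem:4} and the paper's proof do. The only difference is that you bound $\sum_{m\le y}|h_k(m)|$ and the tail self-containedly by Rankin's trick, getting $y^{1/2+\eps}$, whereas Lemma~\ref{lem:5} imports estimates from \cite{Karras 2018} to get the sharper $x^{1/2}(\log x)^{k-2}$, and that extra precision is absorbed into $x^{\eps}$ anyway.
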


	The proof of Theorem \ref{thm:1} requires several lemmas.	We begin with the following convolution identity, established in \cite{Karras 2018}, 
	which plays a fundamental role in our analysis.
	
	\begin{lemma}[Lemma 4 in \cite{Karras 2018}]\label{lem:convolution_Dk}
		We have the convolution identity:
 $D_k * \mu = g_k$, where
	\begin{equation}\label{eq:7}
		g_k(n) = \frac{k-1}{k} \cdot s_2(n) \cdot D_{k-1}(n),
	\end{equation}
		and $s_2(n)$ denotes the characteristic function of 2-full integers (i.e., $s_2(n)=1$ if for every prime $p$ dividing $n$, we have $p^2 \mid n$, and $s_2(n)=0$ otherwise). Consequently, $D_k = g_k * 1$.
	\end{lemma}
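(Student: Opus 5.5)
The identity $D_k*\mu=g_k$ can be checked at the level of Euler factors, since both sides are multiplicative. For a prime $p$, put $X=p^{-s}$. We have $\sum_{m\ge0}\binom{k+m-1}{m}X^m=(1-X)^{-k}$, and $D_k(1)=1$ while $D_k(p^m)=\frac1k\binom{k+m-1}{m}$ for $m\ge1$. Hence
\[
\sum_{m\ge0}D_k(p^m)X^m=1-\frac1k+\frac1k(1-X)^{-k}.
\]
Multiplying by $(1-X)$, the Euler factor of $\mu$, gives the local factor of $D_k*\mu$:
\[
F_p(X)=\Bigl(1-\frac1k\Bigr)(1-X)+\frac1k(1-X)^{-(k-1)}.
\]
The claimed right-hand side $g_k$ has local factor
\[
1+\frac{k-1}{k}\sum_{m\ge2}D_{k-1}(p^m)X^m .
\]
Since $D_{k-1}(p^m)=\frac1{k-1}\binom{k+m-2}{m}$ and $\frac{k-1}{k}\cdot\frac1{k-1}=\frac1k$, this equals
\[
1+\frac1k\sum_{m\ge2}\binom{k+m-2}{m}X^m .
\]

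To compare, I expand $F_p(X)$ using $(1-X)^{-(k-1)}=\sum_{m\ge0}\binom{k+m-2}{m}X^m$, and check the coefficients degree by degree.
\begin{itemize}
\item Degree $0$: the coefficient is $(1-\frac1k)+\frac1k=1$, which agrees with $g_k(1)=1$.
\item Degree $1$: the coefficient is $-(1-\frac1k)+\frac1k\binom{k-1}{1}=-\frac{k-1}{k}+\frac{k-1}{k}=0$, which agrees with $g_k(p)=0$. This is exactly where the factor $s_2$ comes from.
\item Degree $m\ge2$: only the second term contributes, and its coefficient is $\frac1k\binom{k+m-2}{m}=\frac{k-1}{k}D_{k-1}(p^m)$, as required.
\end{itemize}
So the local factors agree at every prime, and multiplicativity gives $D_k*\mu=g_k$. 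Convolving both sides with $1$ and using $\mu*1=\varepsilon$ then gives $D_k=g_k*1$.

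The only real care needed is the special case $k=2$. There $D_1\equiv1$ (indeed $d_1=1$ and $1^{\omega(n)}=1$), and $\binom{m}{m}=1$ matches $D_1(p^m)=1$, so the general formula still applies. There is no genuine obstacle; the main points are the degree-$1$ cancellation and keeping the binomial indices consistent.
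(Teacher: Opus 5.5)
Your local computation is right, and it is the same identity the paper leans on. The paper gives no proof of this lemma (it cites Karras 2018), but the proof of Lemma~\ref{lem:4} uses $1+\sum_{m\ge2}g_k(p^m)p^{-m}=(1-1/p)\bigl[1-\frac1k+\frac1k(1-1/p)^{-k}\bigr]$, which is your $F_p(X)$ at $X=1/p$. The gap is your opening assertion that both sides are multiplicative. The function given by \eqref{eq:7} is not multiplicative, because the constant $\frac{k-1}{k}$ appears once rather than once per prime factor. Read literally, \eqref{eq:7} gives $g_k(1)=\frac{k-1}{k}$, since $s_2(1)=D_{k-1}(1)=1$. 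So your degree-$0$ check, far from ``agreeing with $g_k(1)=1$'', contradicts the displayed formula. Likewise, for 2-full $n$ with $\omega(n)\ge 2$ one has $(D_k*\mu)(n)=\bigl(\frac{k-1}{k}\bigr)^{\omega(n)}D_{k-1}(n)$, not $\frac{k-1}{k}D_{k-1}(n)$. Concretely, for $k=2$ and $n=36$: $(D_2*\mu)(36)=D_2(36)-D_2(18)-D_2(12)+D_2(6)=\frac94-\frac32-\frac32+1=\frac14$, while \eqref{eq:7} gives $\frac12$. So the statement as written is false, and your proof reaches it only by silently substituting a different $g_k$.

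What your argument does establish is the corrected lemma: $D_k*\mu=g_k$, and hence $D_k=g_k*1$, where $g_k$ is the \emph{multiplicative} function with $g_k(p)=0$ and $g_k(p^m)=\frac{k-1}{k}D_{k-1}(p^m)$ for $m\ge 2$. Globally this is $g_k(n)=s_2(n)\bigl(\frac{k-1}{k}\bigr)^{\omega(n)}D_{k-1}(n)=s_2(n)\,d_{k-1}(n)\,k^{-\omega(n)}$. You should state this correction explicitly rather than claim that \eqref{eq:7} is multiplicative. It is also the version the paper actually uses: Lemma~\ref{lem:4} calls $g_k$ multiplicative and uses only its prime-power values. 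The correction is harmless downstream, since $0\le g_k(n)\le D_{k-1}(n)$ for all $n$ and $g_k(n)\le\frac{k-1}{k}D_{k-1}(n)$ for $n>1$, which is all Lemma~\ref{lem:5} needs.
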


	\begin{lemma}\label{lem:sum_via_convolution}
		For any $x \geq 1$ and $q \geq 1$, we have
	\begin{equation}\label{eq:12}
		\sum_{\substack{n \leq x \\ (n,q)=1}} D_k(n) = 
		\frac{\varphi(q)}{q} \, x \sum_{\substack{d \leq x \\ (d,q)=1}} \frac{g_k(d)}{d} 
		+ O\!\Biggl( 2^{\omega(q)} \sum_{\substack{d \leq x \\ (d,q)=1}} |g_k(d)| \Biggr)
	\end{equation}
	\end{lemma}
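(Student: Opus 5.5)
We use the identity $D_k = g_k * 1$ from Lemma \ref{lem:convolution_Dk} and then count integers coprime to $q$ by M\"obius inversion.

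\textbf{Step 1: restrict the convolution.} For every $n$,
\[
D_k(n)=\sum_{dm=n} g_k(d).
\]
Since $(n,q)=1$ holds exactly when both $(d,q)=1$ and $(m,q)=1$, we get
\[
\sum_{\substack{n \le x\\ (n,q)=1}} D_k(n)=\sum_{\substack{d\le x\\ (d,q)=1}} g_k(d)\, N_q(x/d),
\qquad
N_q(y):=\#\{m\le y:(m,q)=1\}.
\]

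\textbf{Step 2: count integers coprime to $q$.} Write $1_{(m,q)=1}=\sum_{e\mid (m,q)}\mu(e)$. Then
\[
N_q(y)=\sum_{e\mid q}\mu(e)\Bigl\lfloor \frac{y}{e}\Bigr\rfloor
=y\sum_{e\mid q}\frac{\mu(e)}{e}+O\Bigl(\sum_{e\mid q}|\mu(e)|\Bigr).
\]
The main term equals $\frac{\varphi(q)}{q}\,y$. Only squarefree divisors contribute to the error, and there are $2^{\omega(q)}$ of them. Each floor function introduces an error of absolute value at most $1$. Hence, uniformly for $y\ge 0$,
\[
N_q(y)=\frac{\varphi(q)}{q}\,y+O\bigl(2^{\omega(q)}\bigr),
\]
with an absolute implied constant. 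This bound holds even for $y<1$, where $N_q(y)=0$.

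\textbf{Step 3: substitute.} Put $y=x/d$ in Step 1 and sum over $d\le x$ with $(d,q)=1$. The main terms combine into
\[
\frac{\varphi(q)}{q}\, x\sum_{\substack{d\le x\\(d,q)=1}}\frac{g_k(d)}{d}.
\]
The error terms add up to $O\bigl(2^{\omega(q)}\sum_{d\le x,(d,q)=1}|g_k(d)|\bigr)$. This is exactly \eqref{eq:12}.

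No step is a real obstacle. The only point that needs care is that the constant in the error of Step 2 is uniform in $y$, so it can be pulled outside the sum over $d$.
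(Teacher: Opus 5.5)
Your proposal is correct and follows the paper's route: expand $D_k = g_k * 1$, split the condition $(n,q)=1$ over $n = dm$ into $(d,q)=1$ and $(m,q)=1$, and apply $\#\{m \le y : (m,q)=1\} = \frac{\varphi(q)}{q}\,y + O(2^{\omega(q)})$ uniformly in $y$. The only difference is that you derive this Legendre-totient estimate yourself by M\"obius inversion, whereas the paper cites it from Bordell\`es.
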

	
	\begin{proof}
		Since $D_k = g_k * 1$, we have $D_k(n) = \sum_{d \mid n} g_k(d)$. Therefore,
		\begin{align*}
			\sum_{\substack{n \leq x \\ (n,q)=1}} D_k(n) 
			&= \sum_{\substack{n \leq x \\ (n,q)=1}} \sum_{d \mid n} g_k(d) \\
			&= \sum_{d \leq x} g_k(d) \sum_{\substack{n \leq x \\ d \mid n \\ (n,q)=1}} 1.
		\end{align*}
		By writing $n = d m$, implies that the condition $d \mid n$ becomes $m \leq x/d$, and the condition $(n,q)=1$ is equivalent to $(d,q)=1$ and $(m,q)=1$. Then by the Legendre totient function $\varphi(.,.)$ (see. e.g,.\cite{Bord O} page $329$), we have
		
	\begin{align*}
		\sum_{\substack{n \leq x \\ (n,q)=1}} D_k(n) 
		&= \sum_{\substack{d \leq x \\ (d,q)=1}} g_k(d) \sum_{\substack{m \leq x/d \\ (m,q)=1}} 1 \\
		&= \sum_{\substack{d \leq x \\ (d,q)=1}} g_k(d) \Biggl( \frac{\varphi(q)}{q} \cdot \frac{x}{d} + O\!\Bigl(2^{\omega(q)}\Bigr) \Biggr).
	\end{align*}

	\end{proof}

	\begin{lemma}\label{lem:4}
		For $\Re(s) > 1/2$, let 
		\[
		G_{k,q}(s) = \sum_{\substack{n=1\\ (n,q)=1}}^{\infty} g_k(n) n^{-s}.
		\]
		Then, for any $\eps > 0$,
		\begin{equation}\label{eq: 13}		
		\sum_{n > x} \frac{|g_k(n)|}{n} \ll x^{-1/2+\eps}.
	\end{equation}
		Moreover,
		\begin{equation}\label{eq: 14}	
		\sum_{\substack{n \le x \\ (n,q)=1}} \frac{g_k(n)}{n} = G_{k,q}(1) + O\bigl(x^{-1/2+\eps}\bigr),
			\end{equation}
		where
$$	G_{k,q}(1) = A_k \cdot \prod_{p \mid q} \frac{k}{(1-1/p) \bigl(k-1 + (1-1/p)^{-k}\bigr)}.$$
	
	\end{lemma}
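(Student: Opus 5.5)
The plan is to use the structure of $g_k$ from Lemma \ref{lem:convolution_Dk}: $g_k(n)=\frac{k-1}{k}s_2(n)D_{k-1}(n)$ is supported on square-full integers. Moreover $D_{k-1}(n)\le d_{k-1}(n)\ll_\eps n^{\eps}$. The tail bound \eqref{eq: 13} will follow from the classical count of square-full integers, $\#\{n\le y: s_2(n)=1\}\ll y^{1/2}$, together with a dyadic decomposition or partial summation.

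\textbf{Tail bound.} Put $Q(y)=\sum_{n\le y}s_2(n)\ll y^{1/2}$. For $n>x$ we have $|g_k(n)|\le s_2(n)\,C_\eps n^{\eps}$. By Abel summation,
\[
\sum_{n>x}\frac{s_2(n)n^{\eps}}{n}=\Bigl[\frac{Q(y)}{y^{1-\eps}}\Bigr]_x^\infty+(1-\eps)\int_x^\infty \frac{Q(y)}{y^{2-\eps}}\,dy\ll x^{-1/2+\eps}.
\]
The boundary term at infinity vanishes, the one at $x$ is $\ll x^{-1/2+\eps}$, and the integral converges to $\ll x^{-1/2+\eps}$. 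This gives \eqref{eq: 13}. The same estimate shows that the Dirichlet series of $|g_k(n)|$ converges absolutely for $\Re(s)>1/2$, so $G_{k,q}(s)$ is well defined there. It also gives \eqref{eq: 14}: $G_{k,q}(1)$ minus the partial sum equals the sum over $n>x$ with $(n,q)=1$, which is bounded by \eqref{eq: 13}.

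\textbf{Euler product.} Since $g_k$ is multiplicative and absolutely summable at $s=1$,
\[
G_{k,q}(1)=\prod_{p\nmid q}E_p,\qquad E_p=\sum_{m\ge0}\frac{g_k(p^m)}{p^m}.
\]
I compute $E_p$ directly instead of through binomial sums. From $D_k=g_k*1$ we have $\sum_m g_k(p^m)p^{-m}=(1-1/p)\sum_m D_k(p^m)p^{-m}$. Using $D_k(p^m)=\frac1k\binom{k+m-1}{m}$ for $m\ge1$ and $D_k(1)=1$,
\[
\sum_{m\ge0}D_k(p^m)p^{-m}=1-\frac1k+\frac1k\Bigl(1-\frac1p\Bigr)^{-k},
\]
so
\[
E_p=\Bigl(1-\frac1p\Bigr)\Bigl(1-\frac1k+\frac1k(1-1/p)^{-k}\Bigr),
\]
which is exactly the local factor of $A_k$ in \eqref{eq:Ak}. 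Hence
\[
G_{k,q}(1)=A_k\prod_{p\mid q}E_p^{-1}=A_k\prod_{p\mid q}\frac{k}{(1-1/p)\bigl(k-1+(1-1/p)^{-k}\bigr)},
\]
as claimed. Removing finitely many factors is legitimate because $E_p\ne0$: indeed $E_p\ge(1-1/p)\cdot 1>0$.

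\textbf{Main obstacle.} The only real point is to justify absolute convergence of the product for $A_k$, i.e.\ that $E_p=1+O(p^{-2})$. The Taylor expansion of $(1-1/p)^{-k}$ gives $E_p=(1-1/p)(1+1/p+O(p^{-2}))=1+O(p^{-2})$. Equivalently, $g_k(p)=0$ because $s_2(p)=0$. A secondary point is the uniformity in $q$ of the implied constants. The bound \eqref{eq: 13} is independent of $q$, since the coprimality condition only removes nonnegative terms.
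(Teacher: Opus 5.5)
Your proposal is correct and follows the paper's proof. The tail bound comes from $|g_k(n)|\ll_\eps n^{\eps}$ on square-full $n$ combined with the $O(\sqrt{y})$ count of square-full integers, and your Abel summation only makes the paper's one-line estimate explicit. $G_{k,q}(1)$ is then obtained by removing the factors with $p\mid q$ from the Euler product whose value at $s=1$ is $A_k$. The remaining differences are cosmetic: you compute the local factor as $(1-1/p)\sum_{m\ge0}D_k(p^m)p^{-m}$ via $g_k=D_k*\mu$, and you read off $\prod_p E_p=A_k$ directly from the definition of $A_k$ instead of citing Lemma~4 of Karras--Derbal. Since this uses only prime-power values of $g_k$, it is also unaffected by the exact global form of the formula for $g_k$.
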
	
	\begin{proof}
		First, the estimate \eqref{eq: 13} follows from the fact that $g_k(n) \neq 0$ only if $n$ is 2-full, and for such $n$, $|g_k(n)| \ll_k n^\epsilon$. On the other hand, we know that the number of  $2$‑full integers $\le t$ is $O(\sqrt{t})$ (see e.g. \cite{Ba Gr}). Since we have
		\[
		\sum_{n > x} \frac{|g_k(n)|}{n} \ll_k \sum_{\substack{n > x \\ n \text{ 2-full}}} \frac{n^\epsilon}{n} \ll x^{-1/2+\epsilon}.
		\]
		
		For equality \eqref{eq: 14}. Since $g_k$ is multiplicative and with the condition $(n,q)=1$, we have
		\[
		G_{k,q}(s) = \prod_{p \nmid q} \left(1 + \sum_{m \geq 2} \frac{g_k(p^m)}{p^{ms}}\right).
		\]
		Becomes $g_k(p)=0$, the sum starts at $m=2$. At $s=1$, from Lemma 4 of \cite{Karras 2018}, we know that
		\[
		\prod_{p} \left(1 + \sum_{m \geq 2} \frac{g_k(p^m)}{p^{m}}\right) = A_k.
		\]
		Therefore,
		\[
		G_{k,q}(1) = A_k  \prod_{p \mid q} \left(1 + \sum_{m \geq 2} \frac{g_k(p^m)}{p^{m}}\right)^{-1}.
		\]
By the definition of $g_k$ from \eqref{eq:7}, the generating series of $D_{k-1}$, and the identity
\[
\sum_{m=0}^\infty \binom{k-1+m}{m} z^m = (1-z)^{-k},
\]
we get
\[
1 + \sum_{m \ge 2} \frac{g_k(p^m)}{p^{m}} 
= (1-1/p) \left[ 1 - \frac{1}{k} + \frac{1}{k} (1-1/p)^{-k} \right].
\]

which completes the proof.
	\end{proof}

\begin{lemma}\label{lem:5}
	For any $x \ge 2$, we have
\begin{equation}\label{eq:8}
	\sum_{\substack{n \le x \\ (n,q)=1}} |g_k(n)| \ll x^{1/2} (\log x)^{k-2}.
\end{equation}
\end{lemma}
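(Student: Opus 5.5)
We drop the coprimality condition, since $g_k\ge 0$ for $k\ge 2$ (indeed $D_{k-1}(n)>0$). It then suffices to show
\[
\sum_{n\le x} g_k(n)\ll x^{1/2}(\log x)^{k-2}.
\]
The plan is to write $g_k$ as a Dirichlet convolution whose main factor is a square-supported divisor function, and to treat the rest as an absolutely convergent perturbation. (The case $k=2$ is degenerate: $D_1\equiv 1$, so $g_2=\frac12 s_2$, and the bound is the classical count $O(x^{1/2})$ of 2-full integers. We may therefore assume $k\ge3$, although the argument below is uniform in $k$.)

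First we compute the local factors. By \eqref{eq:7} and the formula for $D_{k-1}(p^m)$, for $m\ge 2$ we have
\[
g_k(p^m)=\frac{k-1}{k}\cdot\frac{1}{k-1}\binom{k+m-2}{m}=\frac1k\binom{k+m-2}{m}.
\]
In particular $g_k(p^2)=\frac{k-1}{2}$. Let $h$ be the multiplicative function supported on squares with $h(p^{2j})=d_{(k-1)/2}(p^{j})$. Since $(k-1)/2$ need not be an integer, we interpret this via the generalized divisor function $\tau_{z}(p^j)=\binom{z+j-1}{j}$ with $z=(k-1)/2$. Then $h$ has Dirichlet series $\zeta(2s)^{(k-1)/2}$, and $h(p^2)=g_k(p^2)$. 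Define $r$ by $g_k=h*r$. Its Euler factors are
\[
\Bigl(\sum_m g_k(p^m)p^{-ms}\Bigr)(1-p^{-2s})^{(k-1)/2},
\]
so $r(p)=r(p^2)=0$, and $r(p^m)\ll_k C^m$ for $m\ge 3$. Hence $R(s)=\sum_n r(n)n^{-s}$ converges absolutely for $\Re s>1/3$.

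Next comes the main term. We need the average of $h$: $\sum_{n\le y}h(n)=\sum_{m\le \sqrt y}\tau_{(k-1)/2}(m)\ll \sqrt y\,(\log y)^{(k-3)/2}$, by Selberg–Delange or by the elementary bound $\sum_{m\le Y}\tau_z(m)\ll_z Y(\log Y)^{z-1}$. The latter follows from Rankin's trick or Shiu's theorem for nonnegative multiplicative functions, and is what I would cite. Then
\[
\sum_{n\le x}g_k(n)=\sum_{d\le x}|r(d)|\sum_{n\le x/d}h(n)\ll x^{1/2}(\log x)^{(k-3)/2}\sum_{d}\frac{|r(d)|}{d^{1/2}},
\]
and the last sum converges because $R$ converges absolutely at $\sigma=1/2>1/3$. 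This gives the bound $x^{1/2}(\log x)^{(k-3)/2}$, which is even stronger than $(\log x)^{k-2}$ whenever $k\ge 2$ (note $(k-3)/2\le k-2$ for $k\ge1$).

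The main obstacle is justifying the mean-value bound for $\tau_z$ with non-integer $z$ cleanly. To sidestep it, I would instead dominate $h$ by an integer-order divisor function supported on squares. Since $\tau_z(p^j)\le \tau_{k-1}(p^j)=d_{k-1}(p^j)$ when $z\le k-1$, we get
\[
\sum_{n\le y}h(n)\le\sum_{m\le\sqrt y}d_{k-1}(m)\ll \sqrt y(\log y)^{k-2},
\]
using the classical estimate for the Piltz function. This yields exactly the stated exponent $k-2$ and keeps everything elementary. The convolution bound with $r$ goes through unchanged; note that $r$ may take negative values, which is why the absolute values $|r(d)|$ were used above.
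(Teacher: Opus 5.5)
Your proof is correct, but it takes a different route from the paper's.

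The paper bounds $g_k$ pointwise by $\frac{k-1}{k}D_{k-1}$ on 2-full integers and writes each 2-full $n$ as $a^2b^3$ with $b$ squarefree. It then imports three results from Karras--Derbal:
the sub-multiplicative inequality $D_{k-1}(a^2b^3)\le D_{k-1}(a^2)D_{k-1}(b^3)k^{\omega(b)}$;
the mean value $\sum_{a\le y}D_{k-1}(a^2)\ll y(\log y)^{k-2}$;
and the convergence of $\sum_b\mu^2(b)D_{k-1}(b^3)k^{\omega(b)}b^{-3/2}$.

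You instead factor $g_k=h*r$ at the level of Euler products. You choose $h$ with generating function $\zeta(2s)^{(k-1)/2}$ so that it matches $g_k(p^2)=(k-1)/2$ exactly. This puts $r$ on cube-full integers, makes $\sum_d|r(d)|d^{-1/2}$ finite, and reduces everything to the classical Piltz bound via $\tau_{(k-1)/2}\le d_{k-1}$.

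Your version gains three things.
It is self-contained, and, as you note, it covers $k=2$ as well.
It uses only the prime-power values of $g_k$ plus multiplicativity. That is safer than the global formula \eqref{eq:7}, which as written is not multiplicative: it gives $g_k(1)=\frac{k-1}{k}$, and the correct factor is $((k-1)/k)^{\omega(n)}$. So in fact $g_2(n)=2^{-\omega(n)}s_2(n)$, not $\frac12 s_2(n)$. This is harmless for your $k=2$ remark, since $g_2\le s_2$.
It exposes the true order $x^{1/2}(\log x)^{(k-3)/2}$ for $k\ge 3$. The paper's route cannot detect this: since $D_{k-1}(p^2)=k/2>g_k(p^2)$, the pointwise bound gives at best $(\log x)^{k/2-1}$.

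Two small things to tidy.
The display with $|r(d)|$ should be an inequality, not an equality.
The bound $r(p^m)\ll_k C^m$ alone does not make the local factors converge at $\sigma=1/2$ for small $p$ unless $C<\sqrt2$. What you actually have is $|r(p^m)|\ll_k m^{k-1}$, because $g_k(p^m)\ll m^{k-2}$ and the coefficients of $(1-X^2)^{(k-1)/2}$ are bounded. That gives absolute convergence for every $\sigma>1/3$.
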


\begin{proof}
	From \eqref{eq:7} we have $|g_k(n)| \le \frac{k-1}{k} D_{k-1}(n)$ for every 2‑full $n$, and $g_k(n)=0$ otherwise. Consequently, for any $q\ge1$,
	\begin{equation}\label{eq:9}
		\sum_{\substack{n \le x \\ (n,q)=1}} |g_k(n)| \le \frac{k-1}{k} \sum_{\substack{n \le x \\ (n,q)=1 \\ n\text{ is 2‑full}}} D_{k-1}(n)  
		\le \frac{k-1}{k} \sum_{\substack{n \le x \\ n\text{ is 2‑full}}} D_{k-1}(n).
	\end{equation}
	
	For every $2$‑full integer $n$ can be written uniquely as $n = a^2 b^3$ with $\mu(b)^2 = 1$. 
	By Lemma~2 of \cite{Karras 2018} we have the sub‑multiplicative bound
	\[
	D_{k-1}(a^2 b^3) \le D_{k-1}(a^2) D_{k-1}(b^3) k^{\omega(b)}.
	\]
	Thus we have
	\[
	\sum_{\substack{n \le x \\ n\text{ 2‑full}}} D_{k-1}(n)
	\le \sum_{b\le x^{1/3}} \mu(b)^2 D_{k-1}(b^3) k^{\omega(b)} \sum_{a\le\sqrt{x/b^3}} D_{k-1}(a^2).
	\]
	
	Lemma~5 of \cite{Karras 2018} gives $\sum_{a\le y} D_{k-1}(a^2) \ll y (\log y)^{k-2}$, and
	Corollary~7 of the same paper, this result yields
	\[
	\sum_{b\le y} \mu(b)^2 D_{k-1}(b^3) k^{\omega(b)} b^{-3/2} \ll 1.
	\]
According to these latest estimates, we obtain
	\begin{equation}\label{eq:10}
	\sum_{\substack{n \le x \\ n\text{ 2‑full}}} D_{k-1}(n)
	\ll (\log x)^{k-2} \sum_{b\le x^{1/3}} \mu(b)^2 D_{k-1}(b^3) k^{\omega(b)} \sqrt{\frac{x}{b^3}}
	\ll x^{1/2} (\log x)^{k-2}.
\end{equation}
	
	The desired bound \eqref{eq:8} follows by combining \eqref{eq:9} with \eqref{eq:10}.
	
\end{proof}

\begin{proof}[\textbf{Proof of Theorem~\ref{thm:1}}]
		First, by Lemma \ref{lem:sum_via_convolution}, we have
		\begin{align*}
			\sum_{\substack{n \leq x \\ (n,q)=1}} D_k(n) 
			&= \frac{\varphi(q)}{q} x \sum_{\substack{d \leq x \\ (d,q)=1}} \frac{g_k(d)}{d} + O\left(2^{\omega(q)} \sum_{\substack{d \leq x \\ (d,q)=1}} |g_k(d)|\right).
		\end{align*}
		
	According to estimates \eqref{eq: 14} and \eqref{eq:8}, we arrive at this estimate
	
		\begin{align*}
			\sum_{\substack{n \leq x \\ (n,q)=1}} D_k(n) 
			&= \frac{\varphi(q)}{q} x \left( G_{k,q}(1) + O(x^{-1/2+\eps}) \right) + O\left(2^{\omega(q)} x^{1/2} (\log x)^{k-2}\right) \\
			&= \frac{\varphi(q)}{q} G_{k,q}(1) x +  O\left(2^{\omega(q)} x^{1/2+\eps} \right).
		\end{align*}
		Since 
		\[
		\frac{\varphi(q)}{q} = \prod_{p \mid q} \Bigl(1 - \frac{1}{p}\Bigr)
		\quad \text{and} \quad 
		G_{k,q}(1) = A_k  \prod_{p \mid q} \frac{k}{((1-1/p) (k-1 + (1-1/p)^{-k})},
		\]
		we obtain the stated result.
	\end{proof}
	
\bigskip \noindent 
\section{Main result}

We now are in a position to prove the main result of this section, giving an asymptotic formula for $S_k(x;q,a)$.

\begin{theorem}\label{thm:2}
	Let $k \ge 2$ be a fixed integer, and let $a,q$ be integers such that 
$1 \le a \le q$ and $(a,q)=1$. Assume that $q = O(x^\delta)$ for some fixed $\delta > 0$.
Then for any $x \ge 2$ and any $\eps > 0$, we have
	\begin{equation}\label{eq:17}
	S_k(x; q, a) = \frac{G_k(q)}{\varphi(q)} \cdot{x} + O_{k, q}\Bigl( \sqrt{q} \,  x^{1/2+\epsilon} \Bigr),
	\end{equation}
	where $G_k(q)$ is defined as in Theorem~\ref{thm:1}.

\end{theorem}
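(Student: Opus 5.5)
The natural route is orthogonality of Dirichlet characters modulo $q$. Since $(a,q)=1$, we write
\[
S_k(x;q,a)=\frac{1}{\varphi(q)}\sum_{\chi \bmod q}\overline{\chi}(a)\sum_{n\le x}\chi(n)D_k(n).
\]
The principal character $\chi_0$ contributes exactly $\frac{1}{\varphi(q)}\sum_{n\le x,(n,q)=1}D_k(n)$. By Theorem~\ref{thm:1}, in the form \eqref{eq:6} since $q=O(x^\delta)$, this equals $\frac{G_k(q)}{\varphi(q)}x+O(x^{1/2+\eps})$. That is the main term, with an admissible error.

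For a non-principal $\chi$, the plan is to exploit the convolution identity of Lemma~\ref{lem:convolution_Dk}. Since $\chi$ is completely multiplicative, $\chi D_k=(\chi g_k)*\chi$, and so
\[
\sum_{n\le x}\chi(n)D_k(n)=\sum_{d\le x}\chi(d)g_k(d)\sum_{m\le x/d}\chi(m).
\]
The inner character sum is bounded by Pólya--Vinogradov, giving $\ll \sqrt q\log q$, or more simply by $\min(q,x/d)$. A hyperbola-type split may also help: take $d\le y$ with Pólya--Vinogradov and $d>y$ with the trivial bound $x/d$. Using Lemma~\ref{lem:5} and the tail bound \eqref{eq: 13}, the result is
\[
\sum_{n\le x}\chi(n)D_k(n)\ll \sqrt q\log q\sum_{d\le x}|g_k(d)|\ll \sqrt q\,x^{1/2}(\log x)^{k-2}\log q .
\]
Averaging over the $\varphi(q)-1$ non-principal characters with weight $1/\varphi(q)$ gives a total of $O(\sqrt q\log q\, x^{1/2}(\log x)^{k-2})$. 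Because $q\le x^\delta$, the logarithms are absorbed into $x^{\eps}$, and this gives \eqref{eq:17}.

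The only delicate point is the non-principal character bound: we need a uniform estimate for $\sum_{m\le y}\chi(m)$ that does not lose the full factor $q$. Pólya--Vinogradov supplies exactly the $\sqrt q$ in the statement. The rest is bookkeeping of $\log$ factors into $x^\eps$, which is permitted since the implied constant is allowed to depend on $k$ and $q$. Alternatively, one could avoid characters altogether: write $D_k=g_k*1$ directly, and for each $d$ with $(d,q)=1$ count $m\le x/d$ with $m\equiv a\bar d \pmod q$ as $x/(dq)+O(1)$. Summing over $d$ with Lemmas~\ref{lem:4} and~\ref{lem:5} gives main term $\frac{x}{q}G_{k,q}(1)=\frac{G_k(q)}{\varphi(q)}x$, and the error is even $O(x^{1/2+\eps})$. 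I would present this elementary version as a check on the constant, since it confirms that $\frac{\varphi(q)}{q}G_{k,q}(1)/\varphi(q)=G_k(q)/\varphi(q)$.
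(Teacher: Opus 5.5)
Your main argument is the same as the paper's proof: orthogonality of characters, Theorem~\ref{thm:1} in the form \eqref{eq:6} for $\chi_0$, and for $\chi\neq\chi_0$ the factorization $\chi D_k=(\chi g_k)*\chi$ with P\'olya--Vinogradov and Lemma~\ref{lem:5}, after which $\log q\,(\log x)^{k-2}$ is absorbed into $x^{\eps}$ (the hyperbola split and the tail bound you mention are not needed for the non-principal terms). Your character-free variant is also correct and gives more than the paper proves: counting $m\le x/d$ with $m\equiv a\bar d \pmod q$ as $x/(dq)+O(1)$ gives the same main term $\frac{x}{q}G_{k,q}(1)=\frac{G_k(q)}{\varphi(q)}x$ with error $O(x^{1/2+\eps})$ uniformly in $q$, so it needs neither the factor $\sqrt q$ nor the hypothesis $q=O(x^\delta)$.
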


The proof of Theorem \ref{thm:2} is based on the following lemmas. We first recall the orthogonality relation for Dirichlet characters, it has allowed us to separate the arithmetic progressions sums.
	
	Let $q \geq 1$ be an integer. We denote by $\widehat{G}(q)$ the set of Dirichlet characters modulo $q$. For $(a,q)=1$, the orthogonality relation for characters states ( see e.g.\cite{Tom A}):
	\begin{equation}\label{eq:15}
	\frac{1}{\varphi(q)} \sum_{\chi \in \widehat{G}(q)} \overline{\chi}(a) \chi(n) = 
	\begin{cases}
		1 & \text{if } n \equiv a \pmod{q},\\
		0 & \text{otherwise}.
	\end{cases}
\end{equation}
	
	\begin{lemma}\label{lem:filtration}
		With the above notation,
		\begin{equation}\label{eq:16}
		S_k(x; q, a) = \frac{1}{\varphi(q)} \sum_{\chi \in \widehat{G}(q)} \overline{\chi}(a) \sum_{n \leq x} \chi(n) D_k(n).
	\end{equation}
	\end{lemma}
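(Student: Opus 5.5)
Since $S_k(x;q,a)$ is defined in \eqref{eq:4} as a sum over $n\le x$ with $n\equiv a\pmod q$, the plan is to insert the indicator of this congruence condition into an unrestricted sum over $n\le x$ and then swap the order of summation. Concretely, we start from
\[
S_k(x;q,a)=\sum_{n\le x} D_k(n)\,\mathbf{1}_{n\equiv a \,(\mathrm{mod}\ q)} .
\]

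The indicator is replaced by the left-hand side of the orthogonality relation \eqref{eq:15}. This requires $(a,q)=1$, which holds by hypothesis. The relation gives exactly $1$ when $n\equiv a\pmod q$ and $0$ otherwise. There is one subtle point: when $(n,q)>1$, all characters satisfy $\chi(n)=0$, so the character sum vanishes. This is consistent, because such an $n$ cannot be congruent to $a$, since $a$ is coprime to $q$. Hence the replacement is valid for every $n\ge1$, and no separate coprimality condition is needed.

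Next we interchange the finite sum over $n\le x$ with the finite sum over $\chi\in\widehat{G}(q)$, which gives
\[
S_k(x;q,a)=\frac{1}{\varphi(q)}\sum_{\chi\in\widehat{G}(q)}\overline{\chi}(a)\sum_{n\le x}\chi(n)D_k(n).
\]
This is precisely \eqref{eq:16}. Both sums are finite, so the interchange needs no justification beyond linearity.

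There is no real obstacle in this argument. The only point deserving care is the check above, that terms with $(n,q)>1$ contribute zero on both sides. It is worth recording for later use that the principal character $\chi_0$ gives the term $\frac{1}{\varphi(q)}\sum_{n\le x,(n,q)=1}D_k(n)$. By Theorem~\ref{thm:1}, this produces the main term $G_k(q)x/\varphi(q)$ in Theorem~\ref{thm:2}.
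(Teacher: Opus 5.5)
Your proof is correct and is the paper's argument: substitute the orthogonality relation \eqref{eq:15} for the congruence indicator, then interchange the two finite sums. Your check that terms with $(n,q)>1$ contribute zero on both sides is a small detail that the paper leaves implicit.
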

	
	\begin{proof}
		Using \eqref{eq:15} in the definition of $S_k(x; q, a)$ and interchange the sums, we obtain the following identity, recorded as \eqref{eq:16} .
	\end{proof}

\begin{lemma}\label{lem:nonprincipal}
	Let $\chi \neq \chi_0$ be a non-principal Dirichlet character modulo $q$ ($q \ge 1$). 
	For $x \ge 2$, we have
	\[
	S_{\chi}(x) := \sum_{n \le x} \chi(n) D_k(n) \ll_k \sqrt{q} \, (\log q) \, x^{1/2} (\log x)^{k-2}.
	\]
	
\end{lemma}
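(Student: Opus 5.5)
We will use the convolution identity of Lemma~\ref{lem:convolution_Dk}, $D_k = g_k * 1$, twisted by the completely multiplicative character $\chi$. Since $\chi$ is completely multiplicative, $\chi D_k = (\chi g_k) * \chi$. Hence
\[
S_\chi(x) = \sum_{d \le x} \chi(d) g_k(d) \sum_{m \le x/d} \chi(m).
\]
The inner sum is a character sum over an interval. For $\chi \ne \chi_0$ we bound it by the P\'olya--Vinogradov inequality, $\bigl|\sum_{m \le y}\chi(m)\bigr| \ll \sqrt{q}\,\log q$, uniformly in $y$. The whole point is that no main term survives: the contribution of each $d$ is bounded independently of $x/d$.

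Inserting this bound gives
\[
|S_\chi(x)| \ll \sqrt{q}\,(\log q) \sum_{d \le x} |g_k(d)|.
\]
We then apply Lemma~\ref{lem:5} with $q=1$ (the coprimality condition there is harmless, or we simply drop it). That lemma gives $\sum_{d\le x}|g_k(d)| \ll x^{1/2}(\log x)^{k-2}$, which is exactly the claimed bound. The implied constant depends only on $k$, through Lemma~\ref{lem:5} and the absolute constant in P\'olya--Vinogradov.

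The only delicate point is the use of P\'olya--Vinogradov for characters that are not primitive. For an imprimitive $\chi$ induced by a primitive $\chi^*$ modulo $q^* \mid q$, we would either cite the general form, valid for all non-principal characters modulo $q$, or reduce to $\chi^*$. The reduction uses M\"obius inversion over the divisors of $q$ coprime to $q^*$, which at worst costs a factor $2^{\omega(q)}$ or $d(q)$. The standard statement (e.g.\ in Montgomery--Vaughan) already gives $\ll \sqrt{q}\log q$ for every non-principal character, so we would cite that directly.

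If one prefers to avoid P\'olya--Vinogradov altogether, the trivial bound $\bigl|\sum_{m\le y}\chi(m)\bigr|\le \varphi(q)\le q$ gives the same argument with $q$ in place of $\sqrt{q}\log q$. That would suffice for fixed $q$, but it would weaken the $q$-dependence in Theorem~\ref{thm:2}.
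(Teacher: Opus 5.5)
Your proof is correct and follows the paper's own argument: you write $S_\chi(x)=\sum_{d\le x}\chi(d)g_k(d)\sum_{m\le x/d}\chi(m)$ using $D_k=g_k*1$, bound the inner sum uniformly by P\'olya--Vinogradov, and finish with Lemma~\ref{lem:5} at $q=1$. Your caution about imprimitive characters is reasonable but not needed, since the reduction to the inducing primitive character $\chi^*$ mod $q^*$ costs only a bounded factor: $2^{\omega(q/q^*)}\sqrt{q^*}\le \tau(q/q^*)\sqrt{q^*}\le 2\sqrt{q}$, so the general $\sqrt{q}\log q$ form holds and citing it directly is fully justified.
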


\begin{proof}
	Using the convolution identity $D_k = g_k * 1$ (Lemma~\ref{lem:convolution_Dk}), we can write
	\[
	S_{\chi}(x) = \sum_{d \le x} g_k(d) \, \chi(d) \sum_{m \le x/d} \chi(m).
	\]
	For a non-principal character $\chi$, the Pólya--Vinogradov inequality implies that
	\[
	\Bigl| \sum_{m \le y} \chi(m) \Bigr| \ll \sqrt{q} \, \log q \qquad (y \ge 1).
	\]
	Since $|\chi(d)| \le 1$ for all $d$, it follows that
	\[
	|S_{\chi}(x)| \le \sum_{d \le x} |g_k(d)| \, \Bigl| \sum_{m \le x/d} \chi(m) \Bigr| \ll \sqrt{q} \, \log q \sum_{d \le x} |g_k(d)|.
	\]
	
	Next, by Lemma~\ref{lem:5} (taking $q=1$), we get
	\[
	\sum_{d \le x} |g_k(d)| \ll_k x^{1/2} (\log x)^{k-2}.
	\]
	Combining these estimates, we obtain
\[
S_{\chi}(x) \ll_k \sqrt{q} \, (\log q) \, x^{1/2} (\log x)^{k-2},
\]

	as required.
\end{proof}

\begin{proof}[\textbf{Proof of Theorem~\ref{thm:2}}]
	By Lemma~\ref{lem:filtration}, we can write
	\[
	S_k(x; q, a) = \frac{1}{\varphi(q)} \sum_{\chi \in \widehat{G}(q)} \overline{\chi}(a) \, S_\chi(x),
	\quad\text{where } S_\chi(x) = \sum_{n \le x} \chi(n) D_k(n).
	\]
	
	We separate the contribution of the principal character $\chi_0$ from that of the non-principal characters. For the principal character, we have $\chi_0(n) = \mathbf{1}_{(n,q)=1}$, and
	\[
	S_{\chi_0}(x) = \sum_{\substack{n \le x \\ (n,q)=1}} D_k(n). 
	\]
	If we assume $q = O(x^\delta)$ for some fixed $\delta>0$, then, $2^{\omega(q)} \ll_\eps x^\eps$ for any $\eps>0$, and by  Theorem~\ref{thm:1} it follows that
	\begin{equation} \label{eq:17}
	S_{\chi_0}(x) = G_k(q) x + O_{q, \eps}\Bigl( x^{1/2+\epsilon} \Bigr).
		\end{equation}
	
	\medskip\noindent
	
	For each non-principal character $\chi \neq \chi_0$, Lemma~\ref{lem:nonprincipal} gives
\[
S_{\chi}(x) \ll_{k, q} \sqrt{q} \, (\log q) \, x^{1/2} (\log x)^{k-2}.
\]

Under the same hypothesis $q = O(x^\delta)$ we have $(\log q)(\log x)^{k-2} \ll_{q,\eps} x^\eps$, and therefore
\[
S_{\chi}(x) \ll_{q,\eps} \sqrt{q} \, x^{1/2+\eps}.
\]
	
	Since there are exactly $\varphi(q)-1$ non-principal characters, the total contribution of all non-principal characters is of order
	\[
	\frac{1}{\varphi(q)} \sum_{\chi \neq \chi_0} |S_\chi(x)|  \ll_{q, \eps}\Bigl(\sqrt{q} \, x^{1/2+\epsilon} \Bigr).
	\]
	
	Combining the two sums of the principal and non-principal characters, we obtain
	\[
S_k(x; q, a) = \frac{G_k(q)}{\varphi(q)} x + O_{q, \eps}\Bigl( \sqrt{q} \,  x^{1/2+\epsilon} \Bigr),
\]
	as stated.
\end{proof}

\begin{table}[h]
	\centering
	\text{Some Values of $G_k(q)/\varphi(q)$  (4 decimals) from PARI/GP}
	\label{tab:Gk_phi_pari}
	\begin{tabular}{c|cccccccc}
		\toprule
		$k$ & $q=2$ & $q=3$ & $q=4$ & $q=5$ & $q=6$ & $q=7$ & $q=8$ & $q=9$ \\
		\midrule
		2 & 0.5711 & 0.4393 & 0.2855 & 0.2786 & 0.1757 & 0.2016 & 0.1428 & 0.1464 \\
		3 & 0.6672 & 0.6207 & 0.3336 & 0.3491 & 0.1862 & 0.3099 & 0.1668 & 0.2069 \\
		4 & 0.8001 & 0.9427 & 0.4000 & 0.4634 & 0.1985 & 0.5221 & 0.2000 & 0.3142 \\
		5 & 0.9873 & 1.5328 & 0.4936 & 1.2600 & 0.2129 & 0.9614 & 0.2468 & 0.5109 \\
		6 & 1.2564 & 2.6446 & 0.6282 & 2.4587 & 0.2300 & 1.9210 & 0.3141 & 0.8815 \\
		7 & 1.6511 & 4.7919 & 0.8256 & 5.1366 & 0.2503 & 4.1239 & 0.4128 & 1.5973 \\
		8 & 2.2414 & 9.0334 & 1.1207 & 11.3711 & 0.2748 & 9.4179 & 0.5604 & 3.0111 \\
		\bottomrule
	\end{tabular}
\end{table}

\section*{Acknowledgments}
The author wishes to thank the anonymous reviewers for their careful reading of the manuscript and for their valuable comments and suggestions.

\section{Declaration of no conflict of interest}

The author has no relevant financial or non-financial interests to disclose.
The author has no competing interests to declare that are relevant to the content of this article.
The author certifies that he has no affiliations with or involvement in any organization or entity with any financial interest or non-financial interest in the subject matter or materials discussed in this manuscript.
The author has no financial or proprietary interests in any material discussed in this article.


\begin{thebibliography}{9}
	
	\bibitem{Tom A}
	T.~M. Apostol,
	\textit{Introduction to Analytic Number Theory},
	Undergraduate Texts in Mathematics,
	Springer-Verlag, New York, 1976.
	
	\bibitem{Ba Gr}
	P.~T. Bateman and E.~Grosswald,
	\textit{On a theorem of Erdős and Szekeres},
	Illinois J. Math. \textbf{2} (1958), 88--98.
	
	\bibitem{Derbal 2023}
	O.~Bouakkaz and A.~Derbal,
	\textit{The mean value of the function $\frac{d(n)}{d^*(n)}$ in arithmetic progressions},
	Notes Number Theory Discrete Math. \textbf{29} (2023), no.~3, 445--453.
	
		\bibitem{Bord O} 
	O.~Bordellès, \textit{Arithmetic Tales. Advanced Edition}, Universitext, Springer, Cham, 2020.
	
	\bibitem{Karras 2016}
	A.~Derbal and M.~Karras,
	\textit{Valeurs moyennes d'une fonction liée aux diviseurs d'un nombre entier},
	C. R. Math. Acad. Sci. Paris \textbf{354} (2016), no.~5, 555--558.
	
   \bibitem{Karras 2018}
   M.~Karras and A.~Derbal,
   \textit{Mean value of an arithmetic function associated with the Piltz divisor function},
   Asian-Eur. J. Math. \textbf{13} (2020), no.~03, 2050062.
	


	
\end{thebibliography}
\end{document}